\documentclass[-ejs]{imsart}

\RequirePackage[OT1]{fontenc}
\RequirePackage{amsthm, amsmath, amsfonts, amssymb,epsf, graphicx}
\RequirePackage[square]{natbib}
\RequirePackage[colorlinks,citecolor=blue,urlcolor=blue]{hyperref}
\RequirePackage{hypernat}

\makeatletter \@addtoreset{equation}{section} \makeatother

\newcommand{\FF}{{\mathbb F}}

\newcommand{\RR}{{\mathbb  R}}
\newcommand{\SSS}{{\mathbb S}}

\newcommand{\ZZ}{{\mathbb Z}}

\newcommand{\GG}{{\mathbb G}}
\newcommand{\KK}{{\mathbb K}}

\newcommand{\argmax}{\mathop{\rm argmax}}
\newcommand{\argmin}{\mathop{\rm argmin}}

\allowdisplaybreaks[1]



\newtheorem{thm}{Theorem}
\newtheorem{lemma}{Lemma}
\newtheorem{col}{Collorary}

\newtheorem{example}{Example}


\setlength{\oddsidemargin}{0.0in}
\setlength{\evensidemargin}{0.0in}
\setlength{\textwidth}{6.5in}
\setlength{\topmargin}{0.0in}
\advance \topmargin by -\headheight
\advance \topmargin by -\headsep
\advance \topmargin .2in
\setlength{\textheight}{8.0in}
\sloppy \hyphenpenalty=10000
\begin{document}

\begin{frontmatter}
\title{A law of the iterated logarithm for Grenander's estimator}
\runtitle{LIL for Grenander}

\begin{aug}
\author{Lutz D\"umbgen,} \thanksref{t1}\ead[label=e1]{lutz.duembgen@stat.unibe.ch}
\author{Jon A. Wellner,} \thanksref{t2}\ead[label=e2]{jaw@stat.washington.edu}
\and
\author{Malcolm Wolff} \ead[label=e3]{m.lee.wolff@gmail.com}



\thankstext{t1}{Supported in part by Swiss National Science Foundation}
\thankstext{t2}{Supported in part by NSF Grant DMS-1104832 and NI-AID grant 2R01 AI291968-04} 
\runauthor{D\"umbgen, Wellner, and Wolff}



\address{University of Bern\\
       Institute of Mathematical Statistics and Actuarial Science\\
       Alpeneggstrasse 22}
\printead{e1}
\address{Department of Statistics, Box 354322\\University of Washington\\Seattle, WA  98195-4322}
\printead{e2}
\address{Department of Statistics, Box 354322\\University of Washington\\Seattle, WA  98195-4322}
\printead{e3}
\end{aug}

\begin{abstract}
In this note we prove the following law of the iterated logarithm for the 
Grenander estimator of a monotone decreasing density:
If $f(t_0) > 0$, $f'(t_0) < 0$, and $f'$ is continuous in a neighborhood of $t_0$, then
\begin{eqnarray*}
\phantom{bla}
\limsup_{n\rightarrow \infty} \left ( \frac{n}{2\log \log n} \right )^{1/3} ( \widehat{f}_n (t_0 ) - f(t_0) ) = \left| f(t_0) f'(t_0)/2 \right|^{1/3} 2M  
\end{eqnarray*}
almost surely where 
$$
M \equiv \sup_{g \in {\cal G}} T_g = (3/4)^{1/3} \ \ \ 
\mbox{and} \ \ \ 
T_g \equiv \argmax_u \{ g(u) - u^2 \} ;
$$
here ${\cal G}$ is the two-sided Strassen limit set on $\RR$.  
The proof relies on laws of the iterated logarithm for local empirical processes, 
Groeneboom's switching relation, and properties of Strassen's limit set analogous
to distributional properties of Brownian motion.
\end{abstract}

\begin{keyword}[class=AMS]
\kwd[Primary ]{60F15}
\kwd{60F17}
\kwd[; secondary ]{62E20}
\kwd{62F12}
\kwd{62G20}
\end{keyword}

\begin{keyword}
\kwd{Grenander}
\kwd{monotone density}
\kwd{law of iterated logarithm}
\kwd{limit set}
\kwd{Strassen}
\kwd{switching}
\kwd{strong invariance theorem}
\kwd{limsup}
\kwd{liminf}
\kwd{local empirical process}
\end{keyword}

\end{frontmatter}


\bigskip

\bigskip

\section{Introduction:  the MLE of a monotone density} 
\label{sec:intro}
\hfill

\par\noindent
Nonparametric estimation of a monotone density was first considered by 
\cite{MR0086459}.  Suppose that $X_1, \ldots , X_n$ are i.i.d. with 
distribution function $F$ on $[0,\infty)$ having a decreasing density $f$.
Grenander showed that the maximum likelihood estimator
$\hat{f}_n$ of $f$ is the (left-) derivative of the least concave majorant of 
the empirical distribution function $\FF_n$
\begin{eqnarray*}
\widehat{f}_n 
& = & \{ \mbox{left derivative of the least concave majorant of }  \FF_n \}.
\end{eqnarray*}
\bigskip

\begin{figure}[htb!]
\centering
\includegraphics[width=0.5\textwidth]{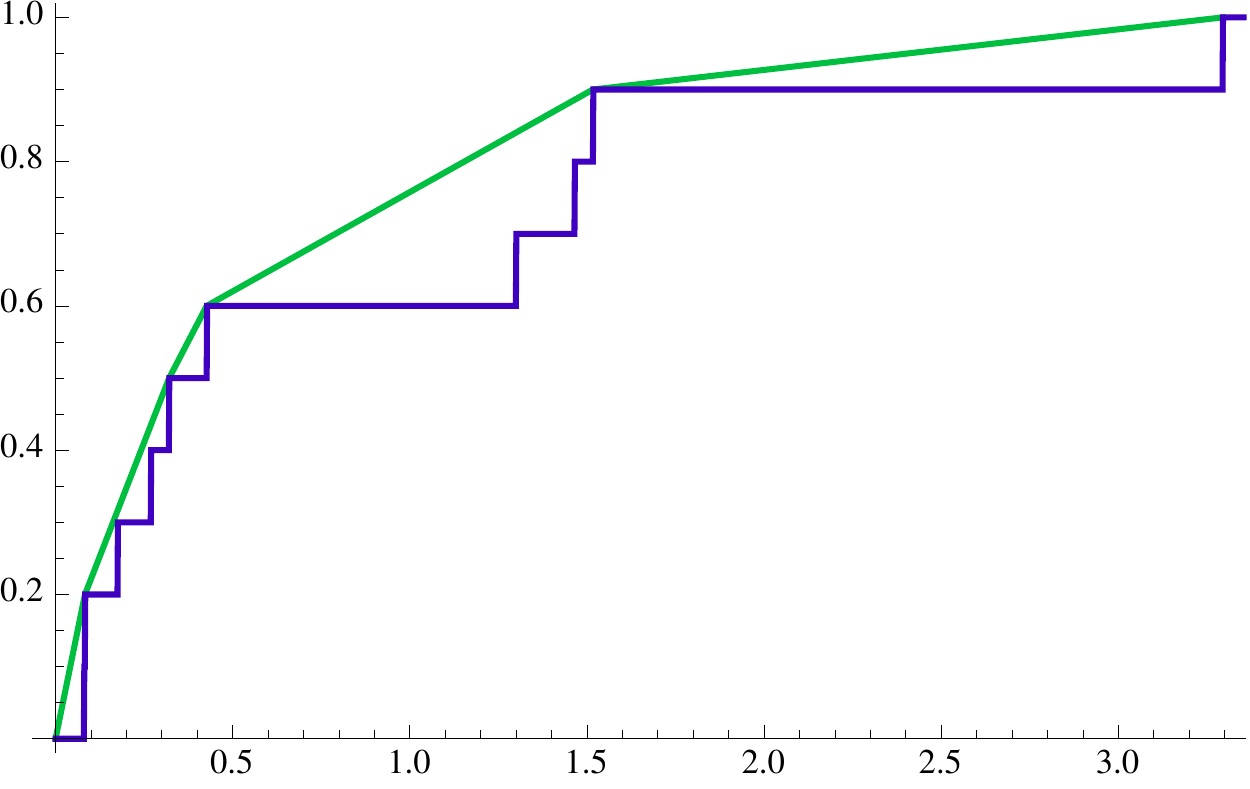}  
\caption{Empirical distribution and Least concave majorant, $n=10$}
\label{fig:figure1}
\end{figure}

\begin{figure}[htb!]
\centering
\includegraphics[width=0.5\textwidth]{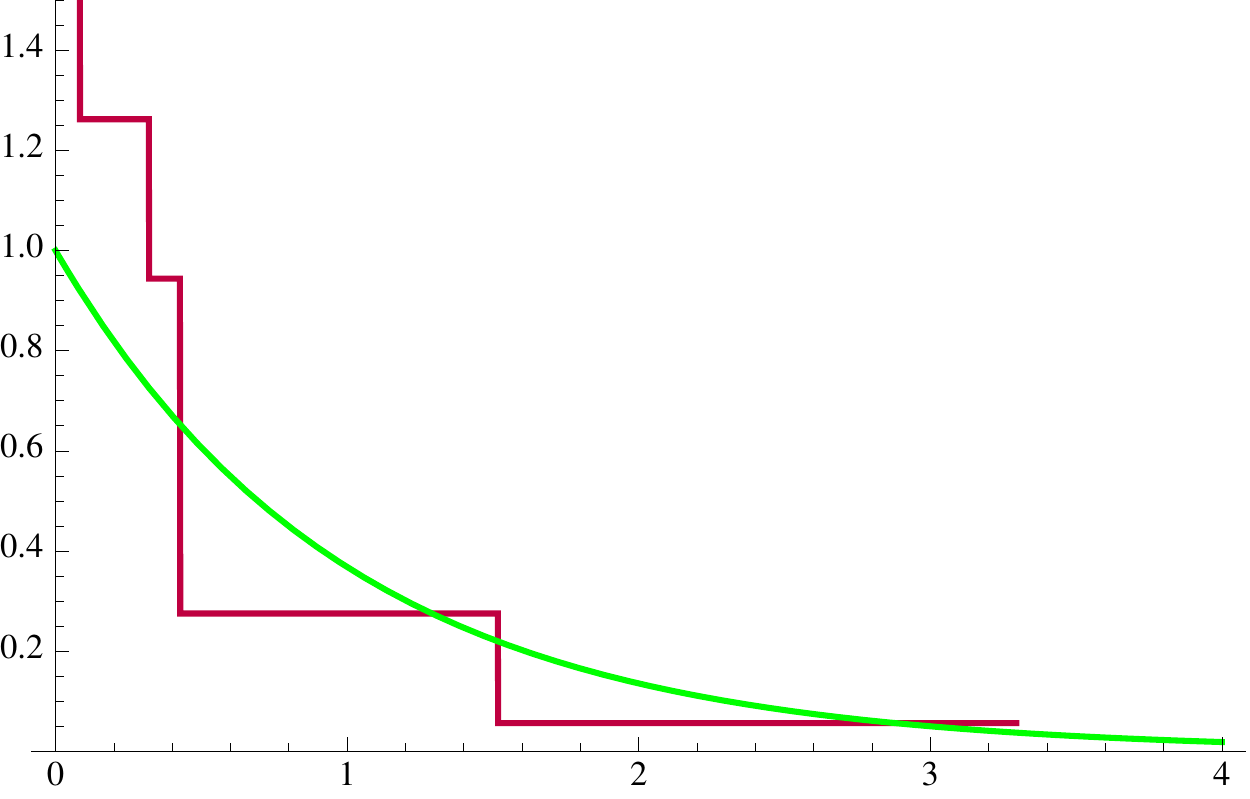}  
\caption{Grenander Estimator and Exp(1) density, $n=10$} 
\label{fig:figure2}
\end{figure}

The asymptotic distribution of $\hat{f}_n (t_0)$ at a fixed point $t_0$ with $f'(t_0) < 0$
was obtained by \cite{MR0267677}, 
and given a somewhat different proof by
\cite{MR822052}. 
If $f'(t_0) < 0$ and $f'$ is continuous in a neighborhood of $t_0$, then 
\begin{equation}
n^{1/3} (\widehat{f}_n (t_0) - f(t_0)) 
\rightarrow_d \Big | \frac{1}{2} f(t_0)f'(t_0) \Big |^{1/3} 2 \ZZ ,
\label{GrenanderLimitDistributionAtFixedPoint}
\end{equation}
where 
\begin{eqnarray}
2 \ZZ &\ = &\mbox{slope at 0 of the least concave majorant of } \ \ W(t) - t^2 \,  
\label{LimitRVFixedPoint}\\
& \stackrel{d}{=} & \mbox{slope at 0 of the greatest convex }  \ \mbox{minorant of } \ \ W(t) + t^2  \nonumber \\
& \stackrel{d}{=} & 2 \, \argmin_{t \in \RR} \{ W(t) + t^2 \}; \nonumber 
\end{eqnarray}
here $\{ W(t) : \ t \in \RR\}$ is a two-sided Brownian motion process starting 
at $0$.  
In fact, the convergence in (\ref{GrenanderLimitDistributionAtFixedPoint}) can be 
extended to weak convergence of the (local) Grenander process as follows.  
Let
$\{\SSS_{a,b} (t) : \ t \in \RR\} $ denote the slope process corresponding to the 
least concave majorant of $X_{a,b} (t) = aW(t)-bt^2$, with 
$a = \sqrt{f(t_0)}$ and $b = | f'(t_0)|/2$.
Then for fixed $t_0$ with $f'(t_0) < 0$ and 
$f'$ continuous in a neighborhood of $t_0$,
$$
n^{1/3} (\hat{f}_n (t_0 + n^{-1/3} t) - f(t_0) ) \Rightarrow  \SSS_{a,b} (t)
$$
in the Skorokhod topology on $D[-K,K]$ for every finite $K>0$; 
see e.g. 
\cite{MR981568}, 
\cite{MR1041391}, and 
\cite{MR1311975}.
\cite{MR981568} 
gives a complete analytic
characterization of the limiting distribution $\ZZ$  and further, the distributional structure of the 
process $\SSS$.  The distribution of $\ZZ = \SSS(0)/2$ has been
studied numerically by \cite{MR1939706} 
which relies heavily on 
\cite{MR822052} and \cite{MR981568}.
\cite{MR3160580} 
show that the distribution of $\ZZ$ is log-concave.
Note that there is an ``invariance principle'' involved here: the centered 
slope of the least concave majorant of $\FF_n$ converges weakly to a 
constant times the slope of the least concave majorant of $X(t) = W(t) - t^2$.
We can regard the slope in this Gaussian limit problem, $2 \ZZ$, as an 
``estimator'' of the slope of the line $2t$ in the Gaussian problem of 
``estimating'' the ``canonical'' linear function $2t$ in ``Gaussian white noise'' $dW(t)$ since
$$
d X(t) = 2 t dt + dW(t) \, .
$$

\section{A law of the iterated logarithm for the Grenander estimator} 
\label{sec:LILforGrenander}

Our main goal is to prove the following Law of the Iterated Logarithm (LIL) for the Grenander estimator
corresponding to the limiting distribution result in (\ref{GrenanderLimitDistributionAtFixedPoint}).
\medskip

\begin{thm} 
Suppose that $f(t_0) >0$, $f_0^{\prime} (t_0) < 0$ with $f^{\prime}$ continuous in a neighborhood of 
$t_0$.  Then
\begin{eqnarray*}
\limsup_{n\rightarrow\infty} \frac{n^{1/3} (  \widehat{f}_n (t_0) - f(t_0)) }{(2 \log \log n)^{1/3}} 
\ = \ \Big | \frac{1}{2} f(t_0)f'(t_0) \Big |^{1/3} 2 M
\end{eqnarray*}
almost surely where 
\begin{eqnarray*}
M \equiv \sup_{g \in {\cal G}} \argmax_{t \in \RR} \{ g(t) - t^2 \} 
=  \left ( \frac{3}{4} \right )^{1/3} ;
\end{eqnarray*}
here ${\cal G}$ is the two-sided Strassen limit set on $\RR$ given by 
\begin{equation}
{\cal G}
	\ = \ \left \{ g : \RR \rightarrow \RR \,\big|\,
	g(t) = \int_0^t \dot{g}(s) ds, \ t \in \RR,
	\ \ \int_{-\infty}^\infty \dot{g}^2 (s) ds \le 1 \right \} \, .
\label{TwoSideStrassenSetonR}
\end{equation}
\end{thm}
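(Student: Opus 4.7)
My plan combines Groeneboom's switching relation, a functional law of the iterated logarithm for the local empirical process around $t_0$, and a direct variational computation of the constant $M$.

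\textbf{Switching and localization.} Set $h_n := (2\log\log n/n)^{1/3}$. Groeneboom's switching relation gives $\fn(t_0) \ge f(t_0) + \alpha h_n$ iff $\argmax_t\{\FF_n(t) - (f(t_0)+\alpha h_n)t\} \ge t_0$. Substituting $t = t_0 + h_n u$ and Taylor-expanding $F$ at $t_0$ (using $f'(t_0)<0$ and $f'$ continuous near $t_0$), this event reduces to $\argmax_u\{Z_n(u) - \alpha u - b u^2 + o(1)\} \ge 0$, where $b = |f'(t_0)|/2$ and
\[
Z_n(u) := h_n^{-2}\bigl[(\FF_n-F)(t_0+h_n u) - (\FF_n-F)(t_0)\bigr].
\]
The shift $w = u + \alpha/(2b)$ absorbs the linear term, so that the desired limsup equals $2b\cdot\limsup_n \argmax_u\{Z_n(u) - b u^2\}$.

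\textbf{LIL for $Z_n$ via strong approximation.} By the Kiefer-process version of KMT, the centered empirical process $\FF_n - F$ can be coupled almost surely with a Gaussian process whose local increments at $t_0$ are those of a Brownian motion. Using Brownian self-similarity one then obtains, uniformly on compact $u$-sets and almost surely, $Z_n(u) = \sqrt{f(t_0)}\,W_n^\sharp(u)/\sqrt{2\log\log n} + o(1)$, where $W_n^\sharp$ is a two-sided standard Brownian motion (depending on $n$ through the coupling). Strassen's functional LIL identifies the almost-sure limit set of $W_n^\sharp/\sqrt{2\log\log n}$ in $C_{\mathrm{loc}}(\mathbb R)$ as the Strassen ball $\mathcal G$, so the almost-sure limit set of $\{Z_n\}$ is $\{\sqrt{f(t_0)}\, g : g \in \mathcal G\}$. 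Since $g \mapsto \argmax_u\{\sqrt{f(t_0)}\, g(u) - b u^2\}$ is upper semicontinuous (the drift $-bu^2$ forces argmaxes into a common compact set),
\[
\limsup_n \argmax_u\{Z_n(u) - b u^2\} = \sup_{g\in\mathcal G}\argmax_u\{\sqrt{f(t_0)}\, g(u) - b u^2\}.
\]
A Brownian-type rescaling $u = \lambda v$ with $\lambda := (f(t_0)/b^2)^{1/3}$ collapses the inner supremum to $\lambda \sup_{g\in\mathcal G}\argmax_v\{g(v)-v^2\} = \lambda M$. Multiplying by the switching factor $2b$ yields $2b\lambda M = 2(f(t_0)b)^{1/3}M = 2|f(t_0)f'(t_0)/2|^{1/3}M$, the asserted limit value.

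\textbf{Evaluating $M$.} For $T \ge 0$ to be an argmax of $g(\cdot)-(\cdot)^2$ one requires $g(T) \ge T^2$ (from the inequality at $s=0$) and $g(s) \le g(T)-T^2+s^2$ for all $s$. A Lagrangian argument with the $L^2$-budget $\int \dot g^2 \le 1$ shows the extremal choice is $\dot g(s) = 2s\,\mathbf{1}_{[0,M]}(s)$: then $g(s) = s^2$ on $[0,M]$, the objective $g(s)-s^2$ vanishes identically there, and (taking the rightmost argmax) $T_g = M$. The budget equation $\int_0^M (2s)^2\,ds = 4M^3/3 = 1$ forces $M = (3/4)^{1/3}$. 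The chief technical obstacle is in the LIL step: establishing a two-sided LIL for the local empirical process at the cube-root scale $h_n$, uniformly on compact $u$-windows, and verifying that the strong-approximation remainder is negligible at the Strassen normalization $\sqrt{2\log\log n}$.
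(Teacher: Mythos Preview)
Your proof follows essentially the same route as the paper: Groeneboom's switching relation, localization at scale $b_n=(2\log\log n/n)^{1/3}$, identification of the almost-sure limit set of the local empirical increments via a functional LIL, rescaling to the canonical problem, and finally the variational computation of $M$. Two execution differences are worth noting.

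First, for the LIL step the paper does not build the result from a KMT/Kiefer coupling and Strassen; it cites directly the functional LIL for the local (tail) empirical process due to Mason (1988) and Deheuvels--Mason (1994), which already delivers the two-sided Strassen limit set $\{g(f(t_0)\,\cdot):g\in\mathcal G\}$ at the required normalization. This sidesteps precisely the remainder-control issue you flag as the chief obstacle; your route is morally what underlies those references, but re-deriving it requires care (the $n$-dependent $W_n^\sharp$ must be tied to a single Brownian motion via time-scaling before Strassen applies). The paper also packages the shift/rescaling you perform into two short lemmas showing $\{g(c\,\cdot+d)-g(d):g\in\mathcal G\}=\sqrt{c}\,\mathcal G$ and the resulting argmax identity.

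Second, for $M=(3/4)^{1/3}$ the paper gives a rigorous upper bound via a Hilbert-space cone argument rather than a Lagrangian heuristic: if $t_0=T_g>0$, the argmax condition on $[0,t_0]$ reads $\langle \dot g-\dot g_0,\mathbf 1_{(t,t_0]}\rangle\ge 0$ for all $t\in[0,t_0]$, where $\dot g_0(u)=2u$; since $\dot g_0$ lies in the closed convex cone generated by these indicators, $\langle \dot g-\dot g_0,\dot g_0\rangle\ge 0$, and Cauchy--Schwarz then forces $\|\dot g\|_{L^2[0,t_0]}\ge\|\dot g_0\|_{L^2[0,t_0]}=(4t_0^3/3)^{1/2}$, so the budget $\int\dot g^2\le 1$ yields $t_0\le(3/4)^{1/3}$. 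Your explicit extremal $\dot g(s)=2s\,\mathbf 1_{[0,M]}(s)$ correctly achieves equality, but the Lagrangian step as written does not yet prove the upper bound.
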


Our proof of Theorem 1 will rely on functional laws of the iterated logarithm for the local empirical process 
established by \cite{MR978022}; 
see also \cite{MR1303659},  
\cite{MR1652321},   
\cite{MR1440134},   
and 
\cite{MR2060302}.  
Along the way we will also prove several lemmas concerning the limit set ${\cal G}$. 
\medskip

\begin{proof}[\bf Proof]
We begin the proof of Theorem 1 with a switching argument.
Let $b_n \equiv (n^{-1}2 \log \log n)^{1/3}$.  Then we want to find a number $x_0$ such that 
\begin{eqnarray*}
P( b_n^{-1} (\widehat{f}_n (t_0 ) - f(t_0)) > x \ \ \mbox{i.o.} ) = \left \{ 
\begin{array}{l l} 0, & \ \ \mbox{if} \ x> x_0, \\ 1, & \ \ \mbox{if} \ x < x_0 . 
\end{array} \right .
\end{eqnarray*}
Now  we let
\begin{eqnarray}
\widehat{s}_n (a) \equiv \argmax_s \{ \FF_n (s) - a s \}, \ \ a \ge 0,
\label{SwitchingRelation}
\end{eqnarray} 
and note that $\{ \widehat{f}_n (t_0) > a \} = \{ \widehat{s}_n (a) > t_0 \}$
by Groeneboom's switching relation (see e.g. 
\cite{MR822052}, 
\cite{MR1385671} page 296, 
and \cite{MR2829859}, Theorem 2.1, page 881). 
Thus  the event in the last display can be rewritten as 
\begin{eqnarray}
\left \{ \widehat{f}_n (t_0) > f(t_0) + b_n x  \ \ \mbox{i.o.} \right \} 
& = & \left \{ \widehat{s}_n (f(t_0) + b_n x ) > t_0 \ \ \mbox{i.o.} \right \} .
\label{SwitchRelationforLIL}
\end{eqnarray}
But, by letting $s = t_0 + b_n h$ in (\ref{SwitchingRelation}) we see that 
\begin{eqnarray*}
\widehat{s}_n (f(t_0) + b_n x )  - t_0
& = & b_n \argmax_h \{ \FF_n (t_0 + b_n h) - (f(t_0) + b_n x)(t_0 + b_n h) \} ,
\end{eqnarray*}
and hence the right side of (\ref{SwitchRelationforLIL}) can be rewritten 
as $\{ \widehat{h}_n > 0 \ \mbox{i.o.} \}$ where
\begin{eqnarray}
\widehat{h}_n 
& = & \argmax_h \{ \FF_n (t_0 + b_n h) - (f(t_0)+ b_n x)(t_0 + b_n h) \} \nonumber \\
& = & \argmax_h \left \{ b_n^{-2} \{ \FF_n (t_0 + b_n h) - \FF_n (t_0) - (F(t_0 + b_n h) - F(t_0)) \}  \right . \nonumber \\
&& \qquad  \left . + \ b_n^{-2} \{ F(t_0 + b_n h) - F(t_0) - f(t_0)  b_n h \} - x h \right \} .  \label{ArgMaxReformulated}
\end{eqnarray}
The second term on the right side in the last display converges to $f^{\prime} (t_0) h^2/2 $ as $n \rightarrow \infty$.
The handle the first term we appeal to (a slight extension of) Theorem 2 of 
\cite{MR978022}; 
see also 
\cite{MR1303659}  
 Theorem A and Theorem 1.1, pages 1620-1621:  
by considering $h \in \RR$ and introducing the two-sided version ${\cal G}$ of the Strassen limit set 
given in (\ref{TwoSideStrassenSetonR}) much as in \cite{MR0358950},  
we see that the sequence of functions
$$
\left \{ b_n^{-2} \{ \FF_n (t_0 + b_n h) - \FF_n (t_0) - (F(t_0 + b_n h) - F(t_0))\} : \ h \in \RR \right \}
$$
is almost surely relatively compact with limit set 
$$
\{  g( f(t_0) \cdot ) : \ g \in  {\cal G} \} 
$$
where ${\cal G}$ is given by \eqref{TwoSideStrassenSetonR}.

This is most easily seen as follows:  let $\GG_n$ be the empirical d.f. of $\xi_1, \ldots , \xi_n$ i.i.d. Uniform$(0,1)$.  
As in \cite{MR1303659},  
with $n^{-1} k_n \equiv b_n$ so that $k_n = nb_n = n^{2/3} (2 \log \log n)^{1/3} \nearrow \infty$ 
and $n^{-1} k_n = b_n \searrow 0$,
the processes
\begin{eqnarray*}
\lefteqn{\frac{\xi_n (s)}{\sqrt{2 \log \log n} } } \\
& = & \frac{n^{1/2}}{\sqrt{k_n/n}} 
          \frac{\left \{ \GG_n ( F (t_0 + n^{-1} k_n s)) - \GG_n (F(t_0))  -  (F( t_0+n^{-1} k_n s) - F(t_0)) \right \}}
                 {\sqrt{2 \log \log n}}
\end{eqnarray*}
with $s\ge 0$ 
are almost surely relatively compact with limit set 
${\cal K}_{\infty} (c ) \equiv \{ t \mapsto g(ct): \ g \in {\cal K}_{\infty} \}$ with $c = f(t_0)$.
Here we also note that
\begin{eqnarray*}
\frac{n^{1/2}}{\sqrt{k_n/n} \sqrt{2 \log \log n}} = \frac{n^{2/3}}{(2 \log \log n)^{2/3}} =  b_n^{-2} .
\end{eqnarray*}
Thus the processes involved in the argmax in 
\eqref{ArgMaxReformulated} are almost surely relatively compact with limit set
\begin{eqnarray*}
\{ g( f(t_0) h ) + 2^{-1} f'(t_0) h^2 - xh : \ \ g \in {\cal G} \},
\end{eqnarray*}
and by Lemma 1 below this set is equal to 
\begin{eqnarray*}
\left \{ a g(h) - b h^2 - xh : \ \ g \in {\cal G} \right \}
\end{eqnarray*}
where $a \equiv \sqrt{ f(t_0)}$, and $b = |f'(t_0)|/2$.
Thus by Lemma 2 below, the set of limits for the argmax in \eqref{ArgMaxReformulated} equals 
\begin{eqnarray*}
\left \{ (a/b)^{2/3} \argmax_h \{ g(h) - h^2 \} - x/(2b) : \ \ g \in {\cal G} \right \}
\end{eqnarray*}
where
\begin{eqnarray*}
\left ( \frac{a}{b} \right )^{2/3} = \left ( \frac{\sqrt{ f(t_0)}}{2^{-1} | f'(t_0) |} \right )^{2/3} =  \left ( \frac{4 f(t_0)}{|f'(t_0)|^2} \right )^{1/3} .
\end{eqnarray*}
Hence, with $T_g = \argmax_h \{ g(h) - h^2 \}$, 
\begin{eqnarray*}
\left \{ \widehat{h}_n > 0 \ \ \mbox{i.o.} \right \} 
& \stackrel{a.s.}{=} & \left \{ \left ( \frac{a}{b} \right )^{2/3} \sup_{g \in {\cal G}} T_g > \frac{x}{2b} \right \} \\
& = & \left \{ 2 b \left ( \frac{a}{b} \right )^{2/3} \sup_{g \in {\cal G}} T_g > x \right \} \\
& = & \emptyset 
\end{eqnarray*}
if 
$$
x > x_0 \equiv 2 b \left ( \frac{a}{b} \right )^{2/3} \sup_{g \in {\cal G}} T_g 
= \Big | \frac{1}{2} f(t_0)f'(t_0) \Big |^{1/3} 2 \sup_{g \in {\cal G}} T_g .
$$
It remains only to show that 
$\sup_{g\in {\cal G}} T_g = (3/4)^{1/3}$.  This follows from Lemma 3 in Section 4 below.
\end{proof}
 
\begin{lemma} 
Let $c > 0$ and $d \in \RR$. Then 
$$
	\left \{ t \mapsto g(ct+d) - g(d) : \ g \in {\cal G} \right \}
	= \sqrt{c}{\cal G} .
$$
\end{lemma}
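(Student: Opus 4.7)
The plan is to establish the identity by exhibiting an explicit bijection between $\mathcal{G}$ and $\sqrt{c}\,\mathcal{G}$ implementing the map $g \mapsto g(c\cdot + d) - g(d)$. The whole argument is a change-of-variables computation, and the only bookkeeping issue is enforcing the normalization $g(0)=0$ baked into the definition of $\mathcal{G}$.

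For the forward inclusion, I would take any $g \in \mathcal{G}$ and set $h(t) = g(ct+d) - g(d)$. Since $g$ is absolutely continuous with derivative $\dot g$ satisfying $\int \dot g^2 \le 1$, the function $h$ is absolutely continuous with $\dot h(t) = c\,\dot g(ct+d)$ and $h(0)=0$. The change of variables $s = cu+d$ gives
\begin{equation*}
\int_{-\infty}^{\infty} \dot h(t)^2\,dt
= \int_{-\infty}^{\infty} c^2\,\dot g(ct+d)^2\,dt
= c\int_{-\infty}^{\infty} \dot g(s)^2\,ds \le c,
\end{equation*}
so $h/\sqrt{c} \in \mathcal{G}$, i.e.\ $h \in \sqrt{c}\,\mathcal{G}$.

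For the reverse inclusion, given $h \in \sqrt{c}\,\mathcal{G}$ (so $\int \dot h^2 \le c$ and $h(0)=0$) I would define
\begin{equation*}
g(u) \;\equiv\; h\!\left(\tfrac{u-d}{c}\right) - h\!\left(-\tfrac{d}{c}\right),
\qquad u \in \RR.
\end{equation*}
The shift by the constant $h(-d/c)$ is precisely what enforces $g(0)=0$, and one checks immediately that $g(ct+d)-g(d) = h(t)$. Then $\dot g(u) = c^{-1}\dot h((u-d)/c)$, and the same substitution yields $\int \dot g^2 = c^{-1}\int \dot h^2 \le 1$, so $g \in \mathcal{G}$. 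This shows every element of $\sqrt{c}\,\mathcal{G}$ arises from the map in question, completing the proof.

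There is no serious obstacle here; the only point worth being careful about is the normalization at $0$, which is why one must subtract $g(d)$ (equivalently, $h(-d/c)$) rather than any other constant. The identity $\int \dot h^2 = c \int \dot g^2$ is the symmetric scaling relation that explains the factor $\sqrt{c}$ on the right-hand side.
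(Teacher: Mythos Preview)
Your proof is correct and follows essentially the same change-of-variables argument as the paper: both directions amount to the scaling identity $\int \dot h^2 = c \int \dot g^2$ under the substitution $s = cu + d$. The only cosmetic difference is that the paper carries out the reverse inclusion at the level of derivatives (defining $\dot g$ from $\dot{\tilde g}$), whereas you define $g$ directly as $h((\,\cdot\,-d)/c) - h(-d/c)$ and then differentiate; these are the same computation.
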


\begin{proof}[\bf Proof]
If $g \in {\cal G}$, then 
\begin{eqnarray*}
g(ct+d) - g(d) 
& = & \int_d^{ct+d} \dot{g}(s) ds = \int_0^{ct} \dot{g} (v + d) dv = \int_0^t \dot{g} (cu +d ) c du \\
& = & \sqrt{c} \int_0^t \sqrt{c} \dot{g} (cu + d) du \\
& = & \sqrt{c} \tilde{g} (t) 
\end{eqnarray*}
where $\tilde{g} \in {\cal G}$ since 
\[
\int_{-\infty}^\infty ( \sqrt{c} \dot{g} (cu+d))^2 du = \int_{-\infty}^{\infty} \dot{g}^2 (w) dw \le 1 .
\]
This shows that the set of functions $t \mapsto g(ct + d) - g(d)$, $g \in {\cal G}$, is contained in $\sqrt{c} {\cal G}$. On the other hand, any function $\tilde{g} \in {\cal G}$ with derivative $\dot{\tilde{g}}$ may be written as $\tilde{g}(t) = \int \sqrt{c} \dot{g}(cu + d) du$ with $\dot{g}$ given by $\dot{g}(s) \equiv \sqrt{c^{-1}} \dot{\tilde{g}}(c^{-1}s - c^{-1}d)$ and satisfying $\int_{-\infty}^\infty \dot{g}(s)^2 ds = \int_{-\infty}^\infty \dot{\tilde{g}}(s)^2 ds \le 1$.
\end{proof}

\begin{lemma} 
Let $\alpha,\beta$ be positive constants and $\gamma \in \RR$. Then 
\begin{eqnarray}
\lefteqn{\left \{ \argmax_h \{ \alpha g(h) - \beta h^2 - \gamma h \} : \ \ g \in {\cal G} \right \} }\nonumber \\
& = & 
\left \{ (\alpha/\beta)^{2/3} \argmax_h \{ g(h) - h^2 \} - \gamma/(2\beta) : \ \ g \in {\cal G} \right \} .
\label{StrassenArgMaxIdentity}
\end{eqnarray}
\end{lemma}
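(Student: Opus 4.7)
The plan is to reduce $\argmax_h\{\alpha g(h) - \beta h^2 - \gamma h\}$ to the canonical form $\argmax_v\{\hat g(v) - v^2\}$ via two changes of variables, using Lemma 1 at each step to stay inside $\mathcal{G}$.

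First I would eliminate the linear term by completing the square: write $-\beta h^2 - \gamma h = -\beta(h + \gamma/(2\beta))^2 + \gamma^2/(4\beta)$, drop the constant (it does not affect the argmax), and substitute $u = h + \gamma/(2\beta)$. This gives
\[
\argmax_h \{\alpha g(h) - \beta h^2 - \gamma h\}
= \argmax_u \{\alpha g(u - \gamma/(2\beta)) - \beta u^2\} - \gamma/(2\beta).
\]
Next, subtracting the constant $\alpha g(-\gamma/(2\beta))$ from the objective (harmless for the argmax) and invoking Lemma 1 with $c = 1$, $d = -\gamma/(2\beta)$, the function $u \mapsto g(u - \gamma/(2\beta)) - g(-\gamma/(2\beta))$ equals some $\tilde g(u)$ with $\tilde g \in \mathcal{G}$, and the correspondence $g \leftrightarrow \tilde g$ is a bijection of $\mathcal{G}$ onto itself.

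Next I would rescale. Choose $\lambda = (\alpha/\beta)^{2/3}$, set $u = \lambda v$, and apply Lemma 1 again, this time with $c = \lambda$ and $d = 0$, to write $\tilde g(\lambda v) = \sqrt{\lambda}\,\hat g(v)$ for some $\hat g \in \mathcal{G}$ (the map $\tilde g \leftrightarrow \hat g$ is again a bijection on $\mathcal{G}$). Our chosen $\lambda$ is precisely the value for which $\alpha \sqrt{\lambda} = \beta \lambda^2 = \alpha^{4/3}\beta^{-1/3}$, so
\[
\alpha\tilde g(\lambda v) - \beta \lambda^2 v^2
= \alpha^{4/3}\beta^{-1/3}\bigl(\hat g(v) - v^2\bigr).
\]
Since the argmax is invariant under multiplication of the objective by a positive constant, and since $u = \lambda v$ rescales the argmax by $\lambda$, we obtain
\[
\argmax_u\{\alpha \tilde g(u) - \beta u^2\}
= (\alpha/\beta)^{2/3}\argmax_v\{\hat g(v) - v^2\}.
\]
Putting the two substitutions together yields (\ref{StrassenArgMaxIdentity}).

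The only thing that needs care is the set equality: both inclusions follow because the two applications of Lemma 1 furnish explicit bijections $g \mapsto \tilde g \mapsto \hat g$ of $\mathcal{G}$ onto itself, so as $g$ ranges over $\mathcal{G}$ so does $\hat g$, and conversely. No analytical difficulty is anticipated; the work is entirely bookkeeping of the translation $\gamma/(2\beta)$ and the scaling $(\alpha/\beta)^{2/3}$, and the main subtlety is simply choosing $\lambda$ so that the $\hat g$ and $v^2$ terms acquire a common coefficient.
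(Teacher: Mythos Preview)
Your proof is correct and follows essentially the same route as the paper: complete the square to remove the linear term, shift, rescale by $(\alpha/\beta)^{2/3}$, and invoke Lemma~1 to see that the resulting reparametrization of $g$ is a bijection of ${\cal G}$. The only cosmetic difference is that the paper performs the shift and the rescaling in a single application of Lemma~1 (with $c=(\alpha/\beta)^{2/3}$ and $d=-\gamma/(2\beta)$ simultaneously), whereas you split this into two separate applications; the content is identical.
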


\begin{proof}[\bf Proof]
Note first that
\begin{eqnarray*}
	M_g
	& \equiv & \argmax_h \left\{ \alpha g(h) - \beta h^2 - \gamma h \right\} \\
	& = & \argmax_h \left\{ \alpha g(h) - \beta (h + \gamma/(2\beta))^2 \right\} \\
	& = & \argmax_h \left\{ g(h) - (\beta/\alpha) (h + \gamma/(2\beta))^2 \right\} \\
	& = & \argmax_v \left\{ g(v + d) - (\beta/\alpha) v^2 \right\} + d
\end{eqnarray*}
with $d := - \gamma/(2\beta)$. Moreover, for any $c > 0$ and
\[
	\tilde{g}(u) \equiv c^{-1/2} \left( g(cu + d) - g(d) \right)
\]
we may write
\begin{eqnarray*}
	M_g
	& = & c \argmax_u \left\{ g(cu + d) - g(d) - (\beta/\alpha) c^2 u^2 \right\}
		+ d \\
	& = & c \argmax_u \left\{ c^{1/2} \tilde{g}(u) - (\beta/\alpha) c^2 u^2 \right\}
		+ d \\
	& = & c \argmax_u \left\{ \tilde{g}(u) - (\beta/\alpha) c^{3/2} u^2 \right\}
		+ d .
\end{eqnarray*}
In case of $c = (\alpha/\beta)^{2/3}$ we obtain
\[
	M_g = (\alpha/\beta)^{2/3} \argmax_u \left\{ \tilde{g}(u) - u^2 \right\}
		- \gamma / (2\beta) .
\]
Now the claim follows from Lemma~1, because the set $\left\{ \tilde{g} : g \in {\cal G} \right\}$ equals ${\cal G}$.
\end{proof}


\section{Some comparisons and connections}
\label{sec:Comparisons}

As noted in the introduction, 
$$
2 \ZZ \stackrel{d}{=} \mbox{slope at zero of the least concave majorant of} \ \ W(t) - t^2 .
$$
This suggests that with $T_g = \mbox{argmax}_t \{ g(t) - t^2 \}$ we have
\begin{eqnarray*}
\lefteqn{\left \{ 2 \sup T_{g} : \ g \in {\cal G} \right \} }\\
& = & \sup 
\{ \mbox{slope at} \ 0 \ \mbox{of the least concave majorant of } \ g(t) - t^2 : \ g \in {\cal G} \} .
\end{eqnarray*}

\section{Proof for the variational problem}
\label{sec:ProofVariationalProblem}

It is natural to conjecture that $\sup_{g \in {\cal G}} T_g = (3/4)^{1/3} \approx 0.90856 \ldots$.
This is motivated by the asymptotic behavior of Chernoff's density; see 
\cite{MR981568}, Corollary 3.4, page 94:  since the density 
$$
f_{\ZZ} (z) \sim \frac{1}{2Ai^{\prime} (a_1) } 4^{4/3} z \exp \left ( - \frac{2}{3} z^3 + 3^{1/3} a_1 z \right )
$$
as $z \rightarrow \infty$, the tail probability $P( \ZZ >z )$ satisfies
$$
P( \ZZ > z ) \sim \frac{1}{2Ai^{\prime} (a_1) } 4^{4/3}  \frac{1}{z} \exp \left ( - \frac{2}{3} z^3 \right ) 
$$
as $z\rightarrow \infty$ where
$a_1 \dot= -2.3381$ is the largest zero of the Airy function $Ai$ and 
$Ai^{\prime} (a_1) \dot= 0.7022$.
  Thus from (\ref{GrenanderLimitDistributionAtFixedPoint}) we expect that
\begin{eqnarray*}
\limsup_{n\rightarrow\infty} \frac{n^{1/3} (  \widehat{f}_n (t_0) - f(t_0)) }{((3/2) \log \log n)^{1/3}} 
\ = \ \Big | \frac{1}{2} f(t_0)f'(t_0) \Big |^{1/3} 2 ,
\end{eqnarray*}
or, equivalently,
\begin{eqnarray*}
\limsup_{n\rightarrow\infty} \frac{n^{1/3} (  \widehat{f}_n (t_0) - f(t_0)) }{(2 \log \log n)^{1/3}} 
& \  = \ & \Big | \frac{1}{2} f(t_0)f'(t_0) \Big |^{1/3} 2 \cdot \frac{1}{2^{1/3}} \cdot \left ( \frac{3}{2} \right )^{1/3} \\
& \ = \ &  \Big | \frac{1}{2} f(t_0)f'(t_0) \Big |^{1/3} 2 \cdot  \left ( \frac{3}{4} \right )^{1/3} .
\end{eqnarray*}
On the other hand the proof of Theorem 1 above leads to
\begin{eqnarray*}
\limsup_{n\rightarrow\infty} \frac{n^{1/3} ( \widehat{f}_n (t_0) - f(t_0)) }{(2 \log \log n)^{1/3}} 
& \  = \ & \Big | \frac{1}{2} f(t_0)f'(t_0) \Big |^{1/3} 2 \cdot M\ \ \ \mbox{a.s.}
\end{eqnarray*}
where 
$$
M \equiv \sup_{g \in {\cal G}} \mbox{argmax}_{t \in \RR} \{ g(t) - t^2 \} \equiv \sup_{g \in {\cal G}} T_g .
$$
Thus we conjecture that $M = (3/4)^{1/3} $.
\medskip

\begin{lemma}
Let $t_0 > 0$ be an arbitrary positive number and let $\dot{g} \in L_1 ([0,t_0])$ 
be an arbitrary function satisfying
\begin{eqnarray*}
	\int_0^{t_0} \dot{g} (s) ds - t_0^2 \ge \int_0^t \dot{g}(s) ds - t^2
	\ \ \ \mbox{for} \ \ 0 \le t \le t_0 .
\end{eqnarray*}
Then
\begin{eqnarray*}
	\int_0^{t_0} \dot{g} (u)^2 du \ge \int_0^{t_0} (2u)^2 du
	= \frac{4t_0^3}{3} .
\end{eqnarray*}
\end{lemma}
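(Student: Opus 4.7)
The plan is to reduce to a perturbation argument around the obvious candidate extremizer $\dot{g}_*(u) = 2u$. One checks immediately that for this choice, $\int_0^t 2s\,ds - t^2 = 0$ for every $t \in [0,t_0]$, so the maximality condition holds (with equality everywhere), and $\int_0^{t_0}(2u)^2\,du = 4t_0^3/3$, matching the desired lower bound. We may assume $\dot{g} \in L_2([0,t_0])$, since otherwise the left-hand side is $+\infty$ and there is nothing to prove.

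Write $\dot{g}(u) = 2u + h(u)$ with $h \in L_2([0,t_0])$. Substituting into the hypothesis and cancelling the $t^2$ terms, the constraint becomes
\[
	\int_t^{t_0} h(s)\,ds \;\ge\; 0 \qquad \text{for all } 0 \le t \le t_0 .
\]
Meanwhile, expanding the square gives
\[
	\int_0^{t_0} \dot{g}(u)^2\,du \;-\; \int_0^{t_0}(2u)^2\,du
	\;=\; 4 \int_0^{t_0} u\, h(u)\,du \;+\; \int_0^{t_0} h(u)^2\,du .
\]
The quadratic term is nonnegative, so it suffices to prove $\int_0^{t_0} u\, h(u)\,du \ge 0$.

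Introduce the auxiliary function $H(t) \equiv \int_t^{t_0} h(s)\,ds$, which is nonnegative by the reformulated constraint and satisfies $H(t_0) = 0$ together with $H'(u) = -h(u)$ (a.e.). Integration by parts then yields
\[
	\int_0^{t_0} u\, h(u)\,du
	\;=\; -\bigl[u H(u)\bigr]_0^{t_0} + \int_0^{t_0} H(u)\,du
	\;=\; \int_0^{t_0} H(u)\,du \;\ge\; 0,
\]
which completes the argument. I do not anticipate a genuine obstacle here: the only real content is recognizing $\dot{g}_*(u)=2u$ as the extremal profile (which is forced by the Euler–Lagrange heuristic and by the equality case we need), after which the perturbation $h$ is controlled by a one-line integration by parts against the nonnegative primitive $H$.
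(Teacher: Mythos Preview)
Your proof is correct and rests on the same key observation as the paper's: writing $h = \dot{g} - \dot{g}_0$ with $\dot{g}_0(u) = 2u$, the hypothesis says $\int_t^{t_0} h(s)\,ds \ge 0$ for all $t$, and the crucial step is to deduce $\int_0^{t_0} u\,h(u)\,du \ge 0$, i.e.\ $\langle h, \dot{g}_0\rangle \ge 0$. The paper obtains this by noting that $\dot{g}_0$ lies in the closed convex cone generated by the indicators $1_{(t,t_0]}$ (the nonnegative nondecreasing functions), and then finishes via Cauchy--Schwarz: $0 \le \langle \dot{g},\dot{g}_0\rangle - \|\dot{g}_0\|^2 \le \|\dot{g}\|\,\|\dot{g}_0\| - \|\dot{g}_0\|^2$. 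You instead establish $\langle h,\dot{g}_0\rangle \ge 0$ by an explicit integration by parts against the nonnegative primitive $H$, and then conclude directly from the expansion $\|\dot{g}\|^2 = \|\dot{g}_0\|^2 + 2\langle h,\dot{g}_0\rangle + \|h\|^2$, bypassing Cauchy--Schwarz altogether. Your route is slightly more elementary and self-contained; the paper's convex-cone phrasing is a bit more conceptual and makes the equality case ($\dot{g} = \dot{g}_0$) transparent, whereas in your argument equality forces $\|h\|^2 = 0$ and $\int H = 0$, hence $h = 0$ a.e.\ as well.
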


\begin{proof}[\bf Proof]
Let $\dot{g}_0 (u) \equiv 2u$.  The claimed inequality is trivial if the 
integral on the left side is infinite, so we may
view $\dot{g}$ and $\dot{g_0}$ as elements of the Hilbert space 
$L_2 ([0,t_0])$.  Then the assumption on $\dot{g}$ may be rewritten as
\begin{eqnarray*}
\langle \dot{g} - \dot{g}_0 , 1 \rangle \ge \langle \dot{g} - \dot{g}_0 , 1_{[0,t]} \rangle \ \ \mbox{for}  \ \ 0 \le t \le t_0 .
\end{eqnarray*}
In other words,
\begin{eqnarray*}
\langle \dot{g} - \dot{g}_0 , 1_{(t,t_0]} \rangle \ge 0 \ \ \ \mbox{for} \ \ 0 \le t \le t_0 ,
\end{eqnarray*}
and this is equivalent to 
\begin{eqnarray*}
\langle \dot{g} - \dot{g}_0 , f \rangle \ge 0 
\end{eqnarray*}
for all functions $f$ in the closed convex cone $\KK$ generated by the indicator functions $1_{(t,t_0]}$.  
This is the set of non-negative and non-decreasing functions on $[0,t_0]$.  In particular, 
$\dot{g}_0 \in \KK$, so 
\begin{eqnarray*}
\langle \dot{g} - \dot{g}_0 , \dot{g}_0 \rangle \ge 0 .
\end{eqnarray*}
Together with the Cauchy-Schwarz inequality we obtain 
\begin{eqnarray*}
0 \le \langle \dot{g} - \dot{g}_0, \dot{g}_0 \rangle 
= \langle \dot{g} , \dot{g}_0 \rangle - \| \dot{g}_0 \|^2  \le \| \dot{g} \| \| \dot{g}_0 \| -  \| \dot{g}_0 \|^2 ,
\end{eqnarray*}
so $\| \dot{g} \| \ge \|\dot{g}_0 \|$.  This inequality is strict unless $\dot{g} = \lambda \dot{g}_0$ 
for some $\lambda \in \RR$.  In this special case the last display reads 
$0 \le ( \lambda - 1) \| \dot{g}_0 \|^2$, so $\lambda \ge 1$ and $\| \dot{g}\| = \lambda \| \dot{g}_0 \|$
with equality if, and only if, $\lambda =1$ and $\dot{g} = \dot{g}_0 $.
\end{proof}


\begin{example} 
If we take $f(x) = e^{-x} 1_{[0,\infty)} (x)$ and $t_0 = \log 2$, then 
\begin{eqnarray*}
\Big | \frac{1}{2} f(t_0)f'(t_0) \Big |^{1/3} \cdot 2 = (2^{-3})^{1/3} \cdot 2 = 1,
\end{eqnarray*}
so the limit superior is  just $\sup_{g \in {\cal G}} T_g = (3/4)^{1/3}$.
\end{example}

\begin{example} 
If we take $f(x) = (1+x)^{-2} 1_{[0,\infty)} (x)$, then $-f^{\prime}(x) = 2(1+x)^{-3}$ and hence
with $t_0 =1$ we have $f(1) = 1/4 = - f^{\prime} (1)$.  Then
\begin{eqnarray*}
\Big | \frac{1}{2} f(t_0)f'(t_0) \Big |^{1/3} \cdot 2 = (2^{-5/3}) \cdot 2 = 2^{-2/3},
\end{eqnarray*}
so the limit superior is  $2^{-2/3} \sup_{g \in {\cal G}} T_g = (3/16)^{1/3}$.
\end{example}

\begin{example} 
If we take $f(x) = (\sqrt{2} -x)1_{[0,\sqrt{2}]} (x)$ and $t_0 = \sqrt{2}-1$, 
then $f(t_0) = 1$, $- f^{\prime} (t_0 ) = 1$, and 
\begin{eqnarray*}
\Big | \frac{1}{2} f(t_0)f'(t_0) \Big |^{1/3} \cdot 2 = (2^{-1/3}) \cdot 2 = 2^{+2/3},
\end{eqnarray*}
so the limit superior is  $2^{+2/3} \sup_{g \in {\cal G}} T_g = 2^{2/3} (3/4)^{1/3} = 3^{1/3}$.
\end{example}

\section{Some corollaries}

Theorem~1 has a number of corollaries and consequences, since the argument in the proof applies to a number 
of problems involving nonparametric estimation of a monotone function.   
Our first corollary, however, involves estimation of the mixing distribution $G$ in the mixture representation of
a monotone density:  
that is, 
\begin{eqnarray}
f(x) 
 =  \int_0^{\infty} \frac{1}{y} 1_{[0,y)} (x) d G(y)         
 =  \int_{\{y>x\} } \frac{1}{y} dG(y) , 
\qquad x \in (0,\infty) 
 \label{MixRepresA-Monotone} 
\end{eqnarray}
for some distribution function $G$ on $(0,\infty)$.
This fact apparently goes back at least to \cite{schoenberg:41};
see the introduction of \cite{MR0077581}, and \cite{MR0270403}, page 158. 
The relationship (\ref{MixRepresA-Monotone}) implies that the corresponding 
distribution function $F$ is given by
\begin{eqnarray*} 
F(x) & = & \int_0^{\infty} \frac{x}{y} 1_{[0,y)} (x) dG(y) + \int_0^{\infty} 1_{[y,\infty)} (x) dG(y) \\
& = & x f(x) + G(x) \, ,
\end{eqnarray*}
and this can be ``inverted'' to yield
\begin{equation}
G(x) = F(x) - x f(x) \, .
\label{InverseOfAMonotone}
\end{equation}
From Figure~\ref{fig:figure3}  we see that the function on the 
right side of (\ref{InverseOfAMonotone}) 
is non-negative and non-decreasing:  
the shaded area gives exactly the difference $F(x) - xf(x)$.

\begin{figure}[htb!]
\centering
\includegraphics[width=0.5\textwidth]{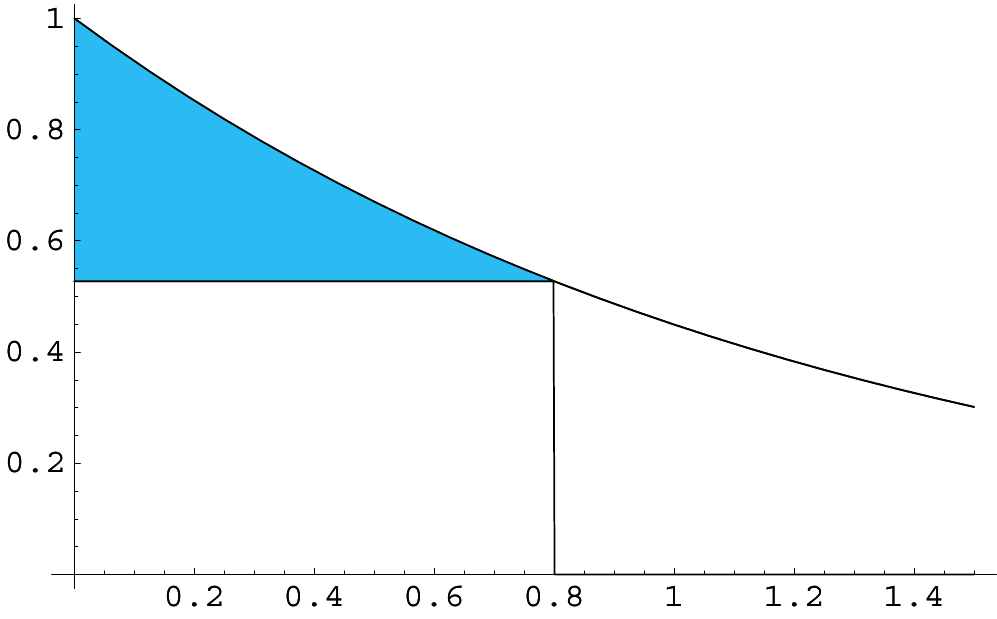}  
\caption{Graphical view of the inversion formula,  monontone density}  
\label{fig:figure3}
\end{figure}

The identity (\ref{InverseOfAMonotone}) implies that the nonparametric maximum 
likelihood estimator of $G$ is $\widehat{G}_n$ given by
\begin{eqnarray*}
\widehat{G}_n (t) = \widehat{F}_n (t) - t \widehat{f}_n (t) , \ \ \mbox{for} \ \ t\ge 0 
\end{eqnarray*}
where $\widehat{F}_n (t) = \int_0^{t} \widehat{f}_n (x)dx $ is the least concave majorant of $\FF_n$ 
and the MLE of $F$ assuming that $f$ is monotone (and hence $F$ is concave).
Thus for $t_0 > 0$ we can write 
\begin{eqnarray*}
n^{1/3} (\widehat{G}_n (t_0) - G(t_0)) = n^{1/3} ( \widehat{F}_n (t_0) - F(t_0)) - t_0 n^{1/3} (\widehat{f}_n (t_0) - f(t_0)) 
\end{eqnarray*}
From Marshall's lemma \cite{MR0273755} and $n^{1/2} \| \FF_n - F \|_{\infty} = O_p (1)$ it follows that 
$n^{1/3} \| \widehat{F}_n - F \|_{\infty} = o_p (1)$.  Thus if $t_0>0$ is a point at which the hypotheses of Theorem 1 hold,
then the convergence in (\ref{GrenanderLimitDistributionAtFixedPoint}) implies that 
\begin{equation}
n^{1/3} (\widehat{G}_n (t_0) - G(t_0)) 
\rightarrow_d t_0 \Big | \frac{1}{2} f(t_0)f'(t_0) \Big |^{1/3} 2 \ZZ ,
\label{MixingDFMLELimitDistributionAtFixedPoint}
\end{equation}
Similarly, from Marshall's lemma \cite{MR0273755} and Chung's law of the iterated logarithm for 
$\| \FF_n - F\|_{\infty}$ 
(see e.g. \cite{MR838963}, page 505), we know that with $b_n \equiv (2 \log \log n)^{1/2}$
$$
\limsup_{n\rightarrow \infty} n^{1/2} \| \widehat{F}_n - F \|_{\infty}/b_n  \le 
\limsup_{n\rightarrow \infty} n^{1/2} \| \FF_n - F \|_{\infty}/b_n  = 1/2 \ \ \mbox{a.s.} .
$$ 
It follows that if $t_0>0$ is a point at which the hypotheses of Theorem 1 hold, then Theorem 1 yields a LIL result for 
$\widehat{G}_n (t_0)$ as follows:
\medskip

\begin{col}
Suppose that  $f(t_0) > 0$ and $f'(t_0) < 0$ with $f'$ continuous in a neighborhood of $t_0$.
Then 
\begin{eqnarray*}
\limsup_{n\rightarrow \infty} \frac{n^{1/3} ( \widehat{G}_n (t_0) - G(t_0))}{(2 \log \log n)^{1/3}} 
= t_0 \Big | \frac{1}{2} f(t_0)f'(t_0) \Big |^{1/3} 2 (3/4)^{1/3} \
\end{eqnarray*}
almost surely.
\end{col}

\section{A further problem} 
For the problem of estimating a convex decreasing density,
\cite{MR1891741} described the limiting distribution of the estimator 
(at a point under a natural curvature condition) in terms of an ``invelope'' of two-sided 
integrated Brownian motion plus $t^4$ which was characterized in \cite{MR1891741}.
The same distribution has appeared in other nonparametric convex function estimation problems, 
for example for log-concave density estimation:  see \cite{MR2509075}.
In spite of this description of the limiting distribution for the convex density case
 in terms of integrated Brownian motion, almost
nothing is known concerning a direct analytical description of the limit distribution comparable 
to the results of \cite{MR822052,MR981568} for Chernoff's distribution.  
(On the other hand, a preliminary numerical investigation of the distribution is given by \cite{MR3178372}.)

This leads to the following question:  can some information concerning 
the constants involved in the limiting distribution in the convex function case 
be obtained by establishing LIL results analogous 
to those established here in the monotone case?

\section*{Acknowledgements}  
The second author owes thanks to Piet Groeneboom for many discussions concerning the Grenander 
estimator and monotone function estimation more generally.  
Thanks are due as well to David Mason for references concerning local LIL's for empirical processes.
$\phantom{blabla}$  


\begin{thebibliography}{24}

\bibitem[\protect\citeauthoryear{Azadbakhsh, Jankowski and
  Gao}{2014}]{MR3178372}
\begin{barticle}[author]
\bauthor{\bsnm{Azadbakhsh},~\bfnm{Mahdis}\binits{M.}},
  \bauthor{\bsnm{Jankowski},~\bfnm{Hanna}\binits{H.}} \AND
  \bauthor{\bsnm{Gao},~\bfnm{Xin}\binits{X.}}
(\byear{2014}).
\btitle{Computing confidence intervals for log-concave densities}.
\bjournal{Comput. Statist. Data Anal.}
\bvolume{75}
\bpages{248--264}.
\bmrnumber{3178372}
\end{barticle}
\endbibitem

\bibitem[\protect\citeauthoryear{Balabdaoui, Rufibach and
  Wellner}{2009}]{MR2509075}
\begin{barticle}[author]
\bauthor{\bsnm{Balabdaoui},~\bfnm{Fadoua}\binits{F.}},
  \bauthor{\bsnm{Rufibach},~\bfnm{Kaspar}\binits{K.}} \AND
  \bauthor{\bsnm{Wellner},~\bfnm{Jon~A.}\binits{J.~A.}}
(\byear{2009}).
\btitle{Limit distribution theory for maximum likelihood estimation of a
  log-concave density}.
\bjournal{Ann. Statist.}
\bvolume{37}
\bpages{1299--1331}.
\bmrnumber{2509075 (2010h:62290)}
\end{barticle}
\endbibitem

\bibitem[\protect\citeauthoryear{Balabdaoui and Wellner}{2014}]{MR3160580}
\begin{barticle}[author]
\bauthor{\bsnm{Balabdaoui},~\bfnm{Fadoua}\binits{F.}} \AND
  \bauthor{\bsnm{Wellner},~\bfnm{Jon~A.}\binits{J.~A.}}
(\byear{2014}).
\btitle{Chernoff's density is log-concave}.
\bjournal{Bernoulli}
\bvolume{20}
\bpages{231--244}.
\bmrnumber{3160580}
\end{barticle}
\endbibitem

\bibitem[\protect\citeauthoryear{Balabdaoui et~al.}{2011}]{MR2829859}
\begin{barticle}[author]
\bauthor{\bsnm{Balabdaoui},~\bfnm{Fadoua}\binits{F.}},
  \bauthor{\bsnm{Jankowski},~\bfnm{Hanna}\binits{H.}},
  \bauthor{\bsnm{Pavlides},~\bfnm{Marios}\binits{M.}},
  \bauthor{\bsnm{Seregin},~\bfnm{Arseni}\binits{A.}} \AND
  \bauthor{\bsnm{Wellner},~\bfnm{Jon}\binits{J.}}
(\byear{2011}).
\btitle{On the {G}renander estimator at zero}.
\bjournal{Statist. Sinica}
\bvolume{21}
\bpages{873--899}.
\bmrnumber{2829859}
\end{barticle}
\endbibitem

\bibitem[\protect\citeauthoryear{Deheuvels and Mason}{1994}]{MR1303659}
\begin{barticle}[author]
\bauthor{\bsnm{Deheuvels},~\bfnm{Paul}\binits{P.}} \AND
  \bauthor{\bsnm{Mason},~\bfnm{David~M.}\binits{D.~M.}}
(\byear{1994}).
\btitle{Functional laws of the iterated logarithm for local empirical processes
  indexed by sets}.
\bjournal{Ann. Probab.}
\bvolume{22}
\bpages{1619--1661}.
\bmrnumber{1303659 (96e:60048)}
\end{barticle}
\endbibitem

\bibitem[\protect\citeauthoryear{Einmahl and Mason}{1997}]{MR1440134}
\begin{barticle}[author]
\bauthor{\bsnm{Einmahl},~\bfnm{Uwe}\binits{U.}} \AND
  \bauthor{\bsnm{Mason},~\bfnm{David~M.}\binits{D.~M.}}
(\byear{1997}).
\btitle{Gaussian approximation of local empirical processes indexed by
  functions}.
\bjournal{Probab. Theory Related Fields}
\bvolume{107}
\bpages{283--311}.
\bmrnumber{1440134 (98d:60060)}
\end{barticle}
\endbibitem

\bibitem[\protect\citeauthoryear{Einmahl and Mason}{1998}]{MR1652321}
\begin{bincollection}[author]
\bauthor{\bsnm{Einmahl},~\bfnm{Uwe}\binits{U.}} \AND
  \bauthor{\bsnm{Mason},~\bfnm{David~M.}\binits{D.~M.}}
(\byear{1998}).
\btitle{Strong approximations to the local empirical process}.
In \bbooktitle{High dimensional probability ({O}berwolfach, 1996)}.
\bseries{Progr. Probab.}
\bvolume{43}
\bpages{75--92}.
\bpublisher{Birkh\"auser, Basel}.
\bmrnumber{1652321 (99h:60060)}
\end{bincollection}
\endbibitem

\bibitem[\protect\citeauthoryear{Feller}{1971}]{MR0270403}
\begin{bbook}[author]
\bauthor{\bsnm{Feller},~\bfnm{William}\binits{W.}}
(\byear{1971}).
\btitle{An introduction to probability theory and its applications. {V}ol.
  {II}.}
\bseries{Second edition}.
\bpublisher{John Wiley \& Sons, Inc., New York-London-Sydney}.
\bmrnumber{0270403 (42 \#\#5292)}
\end{bbook}
\endbibitem

\bibitem[\protect\citeauthoryear{Grenander}{1956}]{MR0086459}
\begin{barticle}[author]
\bauthor{\bsnm{Grenander},~\bfnm{Ulf}\binits{U.}}
(\byear{1956}).
\btitle{On the theory of mortality measurement. {I}}.
\bjournal{Skand. Aktuarietidskr.}
\bvolume{39}
\bpages{70--96}.
\bmrnumber{0086459 (19,188d)}
\end{barticle}
\endbibitem

\bibitem[\protect\citeauthoryear{Groeneboom}{1985}]{MR822052}
\begin{binproceedings}[author]
\bauthor{\bsnm{Groeneboom},~\bfnm{P.}\binits{P.}}
(\byear{1985}).
\btitle{Estimating a monotone density}.
In \bbooktitle{Proceedings of the {B}erkeley conference in honor of {J}erzy
  {N}eyman and {J}ack {K}iefer, {V}ol.\ {II} ({B}erkeley, {C}alif., 1983)}.
\bseries{Wadsworth Statist./Probab. Ser.}
\bpages{539--555}.
\bpublisher{Wadsworth, Belmont, CA}.
\bmrnumber{822052 (87i:62076)}
\end{binproceedings}
\endbibitem

\bibitem[\protect\citeauthoryear{Groeneboom}{1989}]{MR981568}
\begin{barticle}[author]
\bauthor{\bsnm{Groeneboom},~\bfnm{Piet}\binits{P.}}
(\byear{1989}).
\btitle{Brownian motion with a parabolic drift and {A}iry functions}.
\bjournal{Probab. Theory Related Fields}
\bvolume{81}
\bpages{79--109}.
\bmrnumber{981568 (90c:60052)}
\end{barticle}
\endbibitem

\bibitem[\protect\citeauthoryear{Groeneboom, Jongbloed and
  Wellner}{2001}]{MR1891741}
\begin{barticle}[author]
\bauthor{\bsnm{Groeneboom},~\bfnm{Piet}\binits{P.}},
  \bauthor{\bsnm{Jongbloed},~\bfnm{Geurt}\binits{G.}} \AND
  \bauthor{\bsnm{Wellner},~\bfnm{Jon~A.}\binits{J.~A.}}
(\byear{2001}).
\btitle{A canonical process for estimation of convex functions: the
  ``invelope'' of integrated {B}rownian motion {$+t\sp 4$}}.
\bjournal{Ann. Statist.}
\bvolume{29}
\bpages{1620--1652}.
\bmrnumber{1891741 (2003c:62075)}
\end{barticle}
\endbibitem

\bibitem[\protect\citeauthoryear{Groeneboom and Wellner}{2001}]{MR1939706}
\begin{barticle}[author]
\bauthor{\bsnm{Groeneboom},~\bfnm{Piet}\binits{P.}} \AND
  \bauthor{\bsnm{Wellner},~\bfnm{Jon~A.}\binits{J.~A.}}
(\byear{2001}).
\btitle{Computing {C}hernoff's distribution}.
\bjournal{J. Comput. Graph. Statist.}
\bvolume{10}
\bpages{388--400}.
\bmrnumber{1939706}
\end{barticle}
\endbibitem

\bibitem[\protect\citeauthoryear{Huang and Zhang}{1994}]{MR1311975}
\begin{barticle}[author]
\bauthor{\bsnm{Huang},~\bfnm{Youping}\binits{Y.}} \AND
  \bauthor{\bsnm{Zhang},~\bfnm{Cun-Hui}\binits{C.-H.}}
(\byear{1994}).
\btitle{Estimating a monotone density from censored observations}.
\bjournal{Ann. Statist.}
\bvolume{22}
\bpages{1256--1274}.
\bmrnumber{1311975 (95m:62085)}
\end{barticle}
\endbibitem

\bibitem[\protect\citeauthoryear{Kim and Pollard}{1990}]{MR1041391}
\begin{barticle}[author]
\bauthor{\bsnm{Kim},~\bfnm{JeanKyung}\binits{J.}} \AND
  \bauthor{\bsnm{Pollard},~\bfnm{David}\binits{D.}}
(\byear{1990}).
\btitle{Cube root asymptotics}.
\bjournal{Ann. Statist.}
\bvolume{18}
\bpages{191--219}.
\bmrnumber{1041391 (91f:62059)}
\end{barticle}
\endbibitem

\bibitem[\protect\citeauthoryear{Marshall}{1970}]{MR0273755}
\begin{bincollection}[author]
\bauthor{\bsnm{Marshall},~\bfnm{Albert~W.}\binits{A.~W.}}
(\byear{1970}).
\btitle{Discussion on {B}arlow and van {Z}wet's paper}.
In \bbooktitle{Nonparametric {T}echniques in {S}tatistical {I}nference},
(\beditor{\bfnm{Madan~Lal}\binits{M.~L.}~\bsnm{Puri}}, ed.).
\bseries{Proceedings of the First International Symposium on Nonparametric
  Techniques held at Indiana University, June}
\bvolume{1969}
\bpages{174--176}.
\bpublisher{Cambridge University Press}, \baddress{London}.
\bmrnumber{MR0273755 (42 \#\#8632)}
\end{bincollection}
\endbibitem

\bibitem[\protect\citeauthoryear{Mason}{1988}]{MR978022}
\begin{barticle}[author]
\bauthor{\bsnm{Mason},~\bfnm{David~M.}\binits{D.~M.}}
(\byear{1988}).
\btitle{A strong invariance theorem for the tail empirical process}.
\bjournal{Ann. Inst. H. Poincar\'e Probab. Statist.}
\bvolume{24}
\bpages{491--506}.
\bmrnumber{978022 (89m:60076)}
\end{barticle}
\endbibitem

\bibitem[\protect\citeauthoryear{Mason}{2004}]{MR2060302}
\begin{barticle}[author]
\bauthor{\bsnm{Mason},~\bfnm{David~M.}\binits{D.~M.}}
(\byear{2004}).
\btitle{A uniform functional law of the logarithm for the local empirical
  process}.
\bjournal{Ann. Probab.}
\bvolume{32}
\bpages{1391--1418}.
\bmrnumber{2060302 (2005f:60080)}
\end{barticle}
\endbibitem

\bibitem[\protect\citeauthoryear{Prakasa~Rao}{1969}]{MR0267677}
\begin{barticle}[author]
\bauthor{\bsnm{Prakasa~Rao},~\bfnm{B.~L.~S.}\binits{B.~L.~S.}}
(\byear{1969}).
\btitle{Estimation of a unimodal density}.
\bjournal{Sankhy\=a Ser. A}
\bvolume{31}
\bpages{23--36}.
\bmrnumber{0267677 (42 \#\#2579)}
\end{barticle}
\endbibitem

\bibitem[\protect\citeauthoryear{Schoenberg}{1941}]{schoenberg:41}
\begin{barticle}[author]
\bauthor{\bsnm{Schoenberg},~\bfnm{I.~J.}\binits{I.~J.}}
(\byear{1941}).
\btitle{On integral representations of completely monotone and related
  functions (abstract)}.
\bjournal{Bull. Amer. Math. Soc.}
\bvolume{47}
\bpages{208}.
\end{barticle}
\endbibitem

\bibitem[\protect\citeauthoryear{Shorack and Wellner}{1986}]{MR838963}
\begin{bbook}[author]
\bauthor{\bsnm{Shorack},~\bfnm{Galen~R.}\binits{G.~R.}} \AND
  \bauthor{\bsnm{Wellner},~\bfnm{Jon~A.}\binits{J.~A.}}
(\byear{1986}).
\btitle{Empirical processes with applications to statistics}.
\bseries{Wiley Series in Probability and Mathematical Statistics: Probability
  and Mathematical Statistics}.
\bpublisher{John Wiley \& Sons, Inc., New York}.
\bmrnumber{838963 (88e:60002)}
\end{bbook}
\endbibitem

\bibitem[\protect\citeauthoryear{van~der Vaart and Wellner}{1996}]{MR1385671}
\begin{bbook}[author]
\bauthor{\bparticle{van~der} \bsnm{Vaart},~\bfnm{Aad~W.}\binits{A.~W.}} \AND
  \bauthor{\bsnm{Wellner},~\bfnm{Jon~A.}\binits{J.~A.}}
(\byear{1996}).
\btitle{Weak convergence and empirical processes}.
\bseries{Springer Series in Statistics}.
\bpublisher{Springer-Verlag, New York}
\bnote{With applications to statistics}.
\bmrnumber{1385671 (97g:60035)}
\end{bbook}
\endbibitem

\bibitem[\protect\citeauthoryear{Wichura}{1974}]{MR0358950}
\begin{barticle}[author]
\bauthor{\bsnm{Wichura},~\bfnm{Michael~J.}\binits{M.~J.}}
(\byear{1974}).
\btitle{Functional laws of the iterated logarithm for the partial sums of
  {I}.{I}.{D}. random variables in the domain of attraction of a completely
  asymmetric stable law}.
\bjournal{Ann. Probability}
\bvolume{2}
\bpages{1108--1138}.
\bmrnumber{0358950 (50 \#\#11407)}
\end{barticle}
\endbibitem

\bibitem[\protect\citeauthoryear{Williamson}{1956}]{MR0077581}
\begin{barticle}[author]
\bauthor{\bsnm{Williamson},~\bfnm{R.~E.}\binits{R.~E.}}
(\byear{1956}).
\btitle{Multiply monotone functions and their {L}aplace transforms}.
\bjournal{Duke Math. J.}
\bvolume{23}
\bpages{189--207}.
\bmrnumber{0077581 (17,1061d)}
\end{barticle}
\endbibitem

\end{thebibliography}

\end{document}